\newcommand{\keywords}[1]{\par\addvspace\baselineskip
\noindent\keywordname\enspace\ignorespaces#1}
\begin{document}

\mainmatter  

\title{Fourier Method for Approximating Eigenvalues of Indefinite Stekloff Operator\thanks{This work was supported in part by National Science Foundation
Grants DMS-1522615 and DMS-1720114}}

\titlerunning{FFT for Stekloff Eigenvalues Problem}

%
%
\author{Yangqingxiang Wu
\and  Ludmil Zikatanov 
 \Envelope
}


%
\authorrunning{Y. Wu, L. Zikatanov}

\institute{
Department of Mathematics, The Pennsylvania State University\\
University Park, PA 16802, USA\\
\mailsb\\
\url{http://personal.psu.edu/ltz1/}}

%
%

\toctitle{Fourier Method for Approximating Eigenvalues of Indefinite Stekloff Operator}
\tocauthor{Y.~Wu, L.~T.~Zikatanov}
\maketitle

\begin{abstract} We introduce an efficient method for computing the
  Stekloff eigenvalues associated with the Helmholtz equation. In
  general, this eigenvalue problem requires solving the Helmholtz
  equation with Dirichlet and/or Neumann boundary condition repeatedly. We
  propose solving the related constant coefficient Helmholtz equation
  with Fast Fourier Transform (FFT) based on carefully designed
  extensions and restrictions of the equation.  The proposed Fourier
  method, combined with proper eigensolver, results in an efficient
  and clear approach for computing the Stekloff eigenvalues.

\keywords{Stekloff Eigenvalues, FFT, Helmholtz Equation}
\end{abstract}

\section{Introduction}

We consider the problem of computing the Stekloff eigenvalues
corresponding to the indefinite Helmholtz equation. The efficient
computation of such eigenvalues is needed in several numerical models.
For example, in inverse scattering, as discussed in
\cite{2016CakoniF_ColtonD_MengS_MonkP-aa}, these eigenvalues carry
information of the refractive index of an obstacle.  We introduce the
following boundary value problem: For a fixed $\eta$, find
$\lambda\in \mathbb{C}$ such that there is a non-trivial solution
$w\in H^1(\Omega)$ of the equation
\begin{equation}
\label{eq:SP}
\mathcal{L}(\alpha,\lambda;\eta):=\begin{cases}
-\Delta w-\eta^2 w =0\quad \mbox{in} \quad \Omega \\
\alpha\frac{\partial w}{\partial n}+\lambda w=0 \quad \mbox{on} \quad \Gamma=\partial \Omega.
\end{cases}
\end{equation}
We call $\lambda$ a Stekloff eigenvalue when $\alpha=1$ and
\eqref{eq:SP} has non-trivial solution. 

As pointed out in~\cite{2016CakoniF_ColtonD_MengS_MonkP-aa}, the
efficient computation of Stekloff eigenvalues is a challenging
task. In addition, another important application of the techniques
that we propose here is of interest in computing transmission
eigenvalues where the aim is to find the kernel of the difference of
two indefinite Stekloff operators~\cite{2017CakoniF_KressR-aa}.  The
efficient solution of such problems, whether direct or inverse,
requires fast solution of the Dirichlet problem, corresponding to the
operator $\mathcal{L}(0,1;k(x))$, and the Neumann problem,
corresponding to the operator $\mathcal{L}(1,0;k(x))$
in~\eqref{eq:SP}. Here, $k(x)$ is the wave number and in case of non
homogenous problem, such as $\mathcal{L}(\cdot,\cdot;\cdot) = (f,0)^t$, 
the data $f(x)$ is the external force.

The difficulties associated with solving the indefinite Helmholtz
equation numerically, especially in high frequency regimes, are well
known (see~\cite{brandt1997wave, ernst2012difficult}). Traditional
iterative methods, such as Krylov subspace methods or standard
MultiGrid (MG) and Domain Decomposition (DD) methods, are
inefficient. We refer to \cite{brandt1997wave, ernst2012difficult} for
discussion of such issues and guidance on how to tackle them.

Over the last two decades, different preconditioners and solvers for
the Helmholtz equation have been proposed. We refer to the classical
works by Brandt and Livshits~\cite{brandt1997wave}, and Elman et
al.~\cite{elman2001multigrid} for MG solvers and also to the more
recent developments in Helmholtz preconditioning presented
in~\cite{gander2015applying,osei2010preconditioning}.  More recent,
Enquist and Ying \cite{engquist2011sweeping} introduced the so called
sweeping preconditioners which were further extended by Eslaminia and
Guddati~\cite{eslaminia2016double} to double-sweeping preconditioners.
Stolk~\cite{stolk2013rapidly} proposed a DD preconditioner based on
special transmission conditions between subdomains. Other DD methods
are found in~\cite{chen2013source,zepeda2016method}.

In our focus are the computations of Stekloff eigenvalues and the
techniques which we propose here lead to efficient algorithms in many
cases of practical interest and provide preconditioners for the
Helmholtz problem. More importantly, our techniques easily extend to
the Maxwell's system because they are based on the Fourier method.

The rest of the paper is organized as follows.  We introduce the Fourier
method for solving the constant coefficients
boundary value problem in Section \ref{sc:FFThelmholtz} (Dirichlet)
and in Section \ref{sc:FFThelmholtz_Neumann} (Neumann). Further, in
Section \ref{sc:FFT_eig}, we formulate the Stekloff eigenvalue
problem and show how the FFT based Helmholtz solver can be applied. We
conclude with several numerical tests on Stekloff eigenvalue
computations as well as solution of the Helmholtz equation with
variable wave number.

\section{Periodic Extensions and Fourier Method}

\subsection{Dirichlet Boundary Conditions: 1d Case}\label{sc:FFThelmholtz}
To explain the ideas we consider the 1D version of~\eqref{eq:SP}
in the interval $(0,1)$:
\begin{equation*}\label{helmholtz1d}
-u'' - k^2 u = f, \quad u(0)=u(1) = 0.
\end{equation*}

After a discretization, using central finite difference, we
obtain the following linear system 
\begin{equation}\label{helmholtz-fd}
A^D\bm{u} = \bm{f}, \quad A^D=T^D-k^2 h^2 I\in \mathbb{R}^{n\times n},
\end{equation}
where $T^D=\operatorname{diag}(-1,2,-1)$ is a tri-diagonal matrix, $\bm u=(u_1,\ldots,u_n)^t$,
$u_0=u_{n+1} = 0$, $h=1/(n+1)$, $\bm f=h^2(f_1,\ldots,f_n)^t$, and
$u_j\approx u(jh)$, $f_j\approx f(jh)$, $j=1,\ldots n$.

Let us now consider the
same equation on a larger domain $(0,2)$  and
with periodic boundary conditions: 
\begin{equation}
-v''-k^2v = g, \quad v(0)=v(2),\quad v'(0)=v'(2).
\end{equation}
Analogous discretization approach leads to a
linear system for $\bm{v} = (v_1,\ldots,v_N)^t$, $N=2n+2$, which is as follows:
\begin{equation}\label{helmholtz-fd-p}
A^P\bm{v} = \bm{g}, \quad 
A^P = T^P- k^2h^2 I\in \mathbb{R}^{N\times N}.
\end{equation}
Here, $ e_1=(1,0,\ldots,0)^t$ and $e_N=(0,\ldots,0,1)^t$ are the
standard Euclidean basis vectors and
$T^P=\operatorname{diag}(-1,2,-1) - e_1 e_N^t- e_N e_1^t$ is a circulant matrix.
The right hand side $\bm{g} = h^2(g_1,\ldots, g_N)^t$ is a given vector in
$\mathbb{R}^{N}$ depending on $\bm f$, which we specify later.  The unknowns in this case are
$v_j\approx v(2j/N)$, $j=1,\ldots,N$. Notice that from the periodic
boundary conditions, we have $v_N\approx v(0)=v(2)$ and
$v_1\approx v(2/N) $ and this is reflected in the first and the last
equation in the linear system~\eqref{helmholtz-fd-p}.

The solution of systems with circulant matrices can be done
efficiently using the Fast version (FFT) of the Discrete Fourier
Transform (DFT) (see~\cite{1965CooleyJ_TukeyJ-aa} for a description of
FFT). The DFT is represented by an operator
$\mathcal{F}: \mathbb{C}^N\rightarrow \mathbb{C}^N$ represented by a
matrix (denoted again with $\mathcal{F}$) defined as:
$ \mathcal{F}_{jm} = \omega^{(j-1)(m-1)}, \quad
\omega=e^{-\frac{2i\pi}{N}}, and j=1,\ldots,N, \quad m=1,\ldots, N$.
As is well known, we have the DFT inversion formula:
\[
\mathcal{F}^{-1} = 
\frac{1}{N}\mathcal{F}^* = \frac{1}{N}\overline{\mathcal{F}}.
\]
Since $A^P$ here is a circulant matrix it is diagonalized by
$\mathcal{F}$~\cite{1965CooleyJ_TukeyJ-aa,golub2012matrix} and
\begin{equation}\label{diag1d}
\mathcal{F} A^P\mathcal{F}^{-1} = D^P = \operatorname{diag}(d_l),\quad
d_l = 4\sin^2\frac{\pi(l-1)}{N}-k^2h^2, \quad l=1,\ldots N. 
\end{equation}
As a consequence of this proposition, the solution $\bm{v}$ to the
problem~\eqref{helmholtz-fd-p} can be obtained  by
\begin{equation}\label{solvep}
 \bm v = \mathcal{F}^{-1} (D^P)^{-1}\mathcal{F} \bm g.
\end{equation}

Let us now consider the special case when $\bm g$ in
\eqref{helmholtz-fd-p} corresponds to an ``odd'' function. We have the
following simple result.
\begin{proposition}\label{1dodd}
  If $N=2n+2$ and $\bm g $ satisfies $g_{j} = -g_{N-j}$,
 for $j=n+2, n+3,\ldots, 2n+1$ and $g_{n+1}=g_{2n+2}=0$, then the solution
  to~\eqref{helmholtz-fd-p} satisfies the relation:
\begin{equation}\label{e:vvv}
v_{j} = -v_{N-j}, \quad j=n+2,\ldots,2n+1.
\end{equation}
\end{proposition}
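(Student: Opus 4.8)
The plan is to exploit the reflection symmetry that the hypothesis on $\bm g$ encodes, together with the fact that the circulant matrix $A^P$ is invariant under this symmetry. First I would introduce the reflection operator $R\in\mathbb{R}^{N\times N}$ acting by $(R\bm x)_j = x_{\sigma(j)}$, where $\sigma(j)=(N-j)\bmod N$ and indices are taken modulo $N$ with representatives in $\{1,\ldots,N\}$ (so that the index $0$ is identified with $N$). This $R$ is a symmetric permutation matrix with $R^2=I$, and its two fixed indices are $j=n+1$ and $j=N=2n+2$; under $v_j\approx v(jh)$ these correspond to the map $x\mapsto 2-x$, i.e. reflection about the midpoint $x=1$ of $(0,2)$. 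The key observation is that the hypotheses $g_j=-g_{N-j}$ for $j=n+2,\ldots,2n+1$ together with $g_{n+1}=g_{2n+2}=0$ are exactly equivalent to the single identity $R\bm g=-\bm g$, and that the desired conclusion~\eqref{e:vvv} is exactly the corresponding identity $R\bm v=-\bm v$.

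The main step, which I expect to be the crux, is to show that $A^P$ commutes with $R$. Since $A^P=T^P-k^2h^2I$ and $R$ commutes trivially with $I$, it suffices to treat $T^P$, which I would regard as a symmetric circulant matrix with entries $A^P_{jm}=c_{(j-m)\bmod N}$ and $c_\ell=c_{-\ell}$ (the corner terms $-e_1e_N^t-e_Ne_1^t$ are precisely the wrap-around entries that make the stencil $[-1,2,-1]$ circulant). Using $R_{jp}=\delta_{p,\sigma(j)}$ one computes
\[
(R A^P R)_{jm}=A^P_{\sigma(j),\sigma(m)}=c_{(\sigma(j)-\sigma(m))\bmod N}=c_{(m-j)\bmod N}=c_{(j-m)\bmod N}=A^P_{jm},
\]
where the third equality uses $\sigma(j)-\sigma(m)\equiv m-j\pmod N$ and the fourth uses symmetry of the circulant. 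Hence $RA^PR=A^P$, i.e. $A^PR=RA^P$. The only care required here is to handle the periodic wrap-around and the fixed index $j=N$ correctly, so that $R$ genuinely carries the stencil of row $j$ onto that of row $N-j$.

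With the commutation in hand, the conclusion is immediate: applying $R$ to $A^P\bm v=\bm g$ and using $RA^P=A^PR$ and $R\bm g=-\bm g$ gives $A^P(R\bm v)=R(A^P\bm v)=R\bm g=-\bm g=-A^P\bm v$, so $A^P(R\bm v+\bm v)=0$. Since $A^P$ is invertible whenever the solution formula~\eqref{solvep} applies (equivalently, all diagonal entries $d_l$ of $D^P$ in~\eqref{diag1d} are nonzero), this forces $R\bm v=-\bm v$, which is precisely~\eqref{e:vvv}. As an alternative that avoids introducing $R$ explicitly, one can verify directly that the reflected vector $\bm w$ with $w_j=-v_{\sigma(j)}$ satisfies $A^P\bm w=\bm g$, by substituting into the $j$-th equation $-v_{j-1}+2v_j-v_{j+1}-k^2h^2v_j=g_j$ read with periodic indexing and invoking the oddness of $\bm g$; uniqueness of the solution then yields $\bm w=\bm v$ and the same conclusion.
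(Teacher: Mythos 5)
Your proof is correct, but it takes a genuinely different route from the paper's. The paper argues constructively: it takes the solution $\bm u$ of the Dirichlet system $A^D\bm u=(g_1,\ldots,g_n)^t$, forms its odd periodic extension $\bm v=E\bm u$ using the operator $E$ of~\eqref{extension}, verifies that $A^P(E\bm u)=\bm g$, and concludes by uniqueness of the solution of~\eqref{helmholtz-fd-p}; the symmetry~\eqref{e:vvv} is then built into the definition of $E$. You instead encode the hypothesis and conclusion as $R\bm g=-\bm g$ and $R\bm v=-\bm v$ for a reflection permutation, prove $RA^PR=A^P$ from the symmetric-circulant structure of $T^P$, and finish using only the invertibility of $A^P$. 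Your argument is more self-contained (it never touches $A^D$ or the extension operator, and invertibility of $A^P$ is in fact the weaker assumption, since the eigenvalues $4\sin^2(j\pi/N)-k^2h^2$ of $A^D$ form a subset of those of $A^P$), it generalizes verbatim to any symmetric circulant, and it delivers slightly more than~\eqref{e:vvv}, namely $v_{n+1}=v_N=0$ as well. What the paper's construction buys in exchange is the setup for Proposition~\ref{inverse1d}: exhibiting the periodic solution explicitly as $E\bm u$ is what makes the identity $B=(A^D)^{-1}$ immediate. Two minor notational remarks: your reflection operator is named $R$, which collides with the restriction operator of~\eqref{restriction}; and saying that~\eqref{e:vvv} ``is exactly'' $R\bm v=-\bm v$ is slightly imprecise, since the latter is marginally stronger --- though that only strengthens your conclusion and does not affect validity.
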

\begin{proof} 
  We note that by assumption, $\bm g$ is an ``odd'' function with
  respect to the middle of the interval $(0,2)$. Since $A^D$ is
  an invertible matrix, let $\bm u$ satisfy 
  $[A^D\bm u]_j=\bm g_j$, $j=1,\ldots,n$.  Next, we define
  $\bm v=E\bm u\in \mathbb{C}^{N}$ where $E$ is the extension operator
  defined below in \eqref{extension} and it is immediate to verify
  that $A^P\bm v=\bm g$.  Since $A^P$ is also invertible, $\bm v$,
  which satisfies this relation, is the unique solution of
  $A^P\bm v=\bm g$. From the definition of $E$, we conclude that
  $\bm v$ also satisfies~\eqref{e:vvv}.
\end{proof}
Based on this observation, to solve the Dirichlet problem,
we can define a linear operator $B$ (which we will soon prove
equals $(A^D)^{-1}$) as follows: 
Let the extension $\bm g:=E\bm{f}$ be defined as
\begin{equation}\label{extension}
\mathbb{C}^N \ni \bm g = E\bm f,\quad 
g_j = \begin{cases}
f_{j},\quad j=1,\ldots,n,\\ 
0,\quad j=n+1,\\
-f_{N-j}, \quad j=n+2,\ldots,2n+1\\
0,\quad j=N.
\end{cases}
\end{equation}
Here, $N=2n+2$ and we also have the following restriction operator
\begin{equation}
\label{restriction}
\mathbb{C}^n \ni \bm w = R \bm v, \quad 
w_j =v_{j+1}, \quad j=1,\ldots n, \quad \bm v\in\mathbb{R}^N.
\end{equation}
We then set
\begin{equation}\label{B-def}
B\bm f = R \mathcal{F}^{-1} (D^P)^{-1} \mathcal{F} E \bm f.
\end{equation}
As the next proposition shows, $B$ provides the exact solution to
problem~\eqref{helmholtz-fd}.
\begin{proposition}\label{inverse1d}
With $B$ defined in~\eqref{B-def} we have 
\[
B = (A^D)^{-1},\quad\mbox{or equivalently,}\quad \bm u = B \bm f. 
\]
\end{proposition}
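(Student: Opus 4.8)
The plan is to strip away the invertible Fourier factors, reduce everything to the already-established behaviour of $(A^P)^{-1}$ on odd data, and finish with a trivial restriction identity. First, by the diagonalization \eqref{diag1d} we have $\mathcal{F}^{-1}(D^P)^{-1}\mathcal{F} = (A^P)^{-1}$, so the definition \eqref{B-def} collapses to $B = R\,(A^P)^{-1}\,E$. Thus proving $B=(A^D)^{-1}$ amounts to understanding the single operator $(A^P)^{-1}E$.

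The heart of the argument is the intertwining identity
\begin{equation}\label{plan:inter}
(A^P)^{-1}E = E\,(A^D)^{-1},
\end{equation}
which says that solving the periodic problem with the odd extension of $\bm f$ produces exactly the odd extension of the Dirichlet solution. This is essentially the content of the proof of Proposition~\ref{1dodd}: given $\bm f$, I would set $\bm u = (A^D)^{-1}\bm f$ and $\bm v = E\bm u$, and verify directly that $A^P\bm v = E\bm f$; since $A^P$ is invertible this forces $\bm v = (A^P)^{-1}E\bm f$, which is \eqref{plan:inter}. The one nontrivial check --- and the step I expect to be the main obstacle --- is the verification $A^P(E\bm u) = E(A^D\bm u)$ at the rows that are \emph{not} purely interior: rows $1$ and $n$, where the circulant corner entries $-e_1e_N^t-e_Ne_1^t$ of $A^P$ interact with the extension, and the two rows $n+1$ and $N$ straddling the symmetry points $x=1$ and $x\equiv 0$, where one must use that $E$ inserts zeros at those positions so that the reflected neighbours cancel. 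Away from these rows the equality is immediate because $A^P$ acts there exactly like $A^D$, and the remaining (reflected) rows follow from the odd symmetry guaranteed by Proposition~\ref{1dodd}.

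With \eqref{plan:inter} in hand the proof closes in one line: $B\bm f = R(A^P)^{-1}E\bm f = R\,E\,(A^D)^{-1}\bm f$, and a direct inspection of the definitions \eqref{extension} and \eqref{restriction} shows that $R$ returns precisely the interior components that $E$ copied from its argument, i.e.\ $RE = \mathrm{I}_n$ on $\mathbb{C}^n$. Hence $B\bm f = (A^D)^{-1}\bm f$ for every $\bm f$, which is the claim. Equivalently, one could bypass \eqref{plan:inter} and argue directly: let $\bm v=(A^P)^{-1}E\bm f$, invoke Proposition~\ref{1dodd} to obtain $v_{n+1}=v_N=0$, and then read off rows $1,\dots,n$ of $A^P\bm v=E\bm f$ to conclude $A^D(R\bm v)=\bm f$; the same boundary bookkeeping near indices $1,n,n+1,N$ is what carries the argument.
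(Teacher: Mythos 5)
Your proof is correct, but it follows a genuinely different route from the paper's. You reduce everything to the intertwining identity $A^P E = E A^D$ (equivalently $(A^P)^{-1}E = E(A^D)^{-1}$), verified by stencil bookkeeping at the critical rows $1,n,n+1,N$ --- which, as you observe, is exactly the check left implicit in the proof of Proposition~\ref{1dodd} --- and then close with $RE=I_n$. The paper instead left-multiplies by $A^D$, writes $R=(I_n\,|\,\bm 0)$ and $E$ as explicit block matrices, reduces the claim to $(A^D|\bm 0)(A^P)^{-1}=(I_n|\bm 0)$, and verifies that by an explicit DFT sum $\frac1N\sum_l d_l\,d_l^{-1}\omega^{(l-1)(j-i)}=\delta_{ij}$. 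Your structural argument buys two things: the Fourier diagonalization \eqref{diag1d} is then needed only for the fast \emph{application} of $(A^P)^{-1}$, not for the correctness proof; and $A^PE=EA^D$ is strictly stronger than the relation $R A^P E = A^D$ that the paper isolates as Lemma~\ref{ADAP}, so your argument tensorizes to the multidimensional case with no extra work. The paper's computation, conversely, makes the eigenvalues $d_l$ explicit, but as written it treats rows $1$ and $n$ of $(A^D|\bm 0)$ as full circulant rows of $A^P$; the discrepancy sits in columns $n+1$ and $N$, where $E$ vanishes --- precisely the boundary cancellation you carry out explicitly. One small caution: with the restriction taken literally as in \eqref{restriction}, $w_j=v_{j+1}$, one gets a shift and $RE\neq I_n$; the intended reading, used in the paper's own proof, is $R=(I_n\,|\,\bm 0)$, and your argument is consistent with that. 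Your closing alternative (invoking Proposition~\ref{1dodd} to get $v_{n+1}=v_N=0$ and reading off rows $1,\dots,n$ of $A^P\bm v=E\bm f$) is also sound, though note that the \emph{statement} of Proposition~\ref{1dodd} only gives the antisymmetry for $j=n+2,\dots,2n+1$; the vanishing at $j=n+1$ and $j=N$ comes from its proof ($\bm v=E\bm u$), not its statement.
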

\begin{proof}
  We notice that $R=(I_{n}|\bm 0)\in \mathbb{R}^{n\times N}$, 
  and $E=(I_{n}, \bm 0_{n\times 1},-\tilde{I}_{n},\bm 0_{n\times 1}
  )^T\in \mathbb{R}^{N\times n}$, where $I_n$ is the $n\times n$ identity matrix and $\tilde{I}_{n}=(\delta_{i,n+1-j})_{ij}$.   Computing the product $A^DR(A^P)^{-1}E$ then shows that:
  \begin{equation}\label{asterisk}
    A^DR(A^P)^{-1}E=A^D(I|0)(A^P)^{-1}E=(A|0)(A^P)^{-1}E=(I|0)E=I.
  \end{equation}
  Indeed, the identities in~\eqref{asterisk} are verified by direct calculation:
  \begin{equation}
    \begin{split}
      &((A|0)A_p^{-1})_{ij}\\
      &=-\frac{1}{N}(\mathcal F^{-1} D_p ^{-1}\mathcal F)_{i-1,j}+\frac{2}{N}(\mathcal F^{-1} D_p^{-1} \mathcal F)_{i,j}-\frac{1}{N}(\mathcal F^{-1} D_p^{-1} \mathcal F)_{i+1,j}\\
      &=\frac{1}{N}\sum_{l=1}^N(-\bar{\omega}^{(i-2)(l-1)}+(2-k^2h^2)\bar{\omega}^{(i-1)(l-1)}-\bar{\omega}^{i(l-1)})d_l\omega^{(k-1)(j-1)}\\
	&=\frac{1}{N}\sum_{l=1}^N(2-k^2h^2-2\cos(\frac{2\pi (l-1)}{N}))d_l\omega^{(k-1)(j-i)}\\
	&=\frac{1}{N}\sum_{l=1}^N\omega^{(k-1)(j-i)}=\delta_{ij}.\\
	\end{split}
      \end{equation}
      This completes the proof. 
\end{proof}

\subsubsection{Generalization to Higher Dimensions (Dirichlet Problem)}
\label{sc:Dir_high_d}
We now consider the Helmholtz operator $\mathcal{L}(0,1;k(x))$, defined
in~\eqref{eq:SP} in $d$-dimensions, i.e. we take $\Omega=(0,1)^d$.
Discretization with standard central finite differences, we have  the
linear system:
\begin{equation}\label{eq:system2d_dir}
A^D\bm{u} = \bm{f},\quad  A^D = \sum_{j=1}^d  \left(I^{\otimes(j-1)} \otimes T^D \otimes I^{\otimes(d-j)}\right)
  -k^2h^2 I^{\otimes d} \in {\mathbb R}^{n^d\times n^d}.
\end{equation}
Here $M^{\otimes p}: = \underbrace{M \otimes \ldots \otimes M}_{p\quad copies}$ for any matrix $M$ and $T^D$ as in \eqref{helmholtz-fd} is the triangular matrix.

The extension and restriction operators in higher dimensions can be written as $E_d:=E^{\otimes d}$ and $R_d:=R^{\otimes d}$.
$\bm g=E_d \bm f$ is the ``odd'' extension of $\bm f$. As in the 1D case, the linear system for the extended Helmholtz equation is:
\begin{equation}\label{eq:system2d_perio}
A^P\bm{v} = \bm{g},\quad  A^P = \sum_{j=1}^d  \left(I^{\otimes(j-1)} \otimes T^P \otimes I^{\otimes(d-j)}\right)
  -k^2h^2 I^{\otimes d} \in {\mathbb R}^{N^d\times N^d},
\end{equation}
where $T^P$ has been defined in \eqref{helmholtz-fd-p}. As in the one
dimensional case~\eqref{diag1d}, the matrix $A^P$ is diagonalized by
the multidimensional DFT $\mathcal{F}_d=\mathcal{F}^{\otimes d}$.
The multidimensional version of~\eqref{diag1d} then is:
\[
\mathcal{F}_d A^P\mathcal{F}_d^{-1} = D^P_d:=\sum_{j=1}^d  \left(I^{\otimes(j-1)} \otimes D^P \otimes I^{\otimes(d-j)}\right)
  -k^2h^2 I^{\otimes d},
\]
As a consequence, we obtain inversion formula similar to the one
presented in~Proposition~\ref{inverse1d}. To show such representation,
we need the following result.
\begin{lemma}\label{ADAP} Let $E$ and $R$ be extension and restriction operator defined in \eqref{extension} and \eqref{restriction} respectively.  Then following identity holds
\begin{equation}
A^D = R^{\otimes d}  A^P  E^{\otimes d}.
\end{equation}
\end{lemma}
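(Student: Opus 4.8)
The plan is to reduce this $d$-dimensional identity to two elementary one-dimensional relations and then propagate them through the Kronecker structure of $A^D$ and $A^P$. I would first isolate the building blocks
\[
RE = I_n, \qquad R\,T^P E = T^D,
\]
with $R$, $E$, $T^D$, $T^P$ as in \eqref{restriction}, \eqref{extension}, \eqref{helmholtz-fd}, and \eqref{helmholtz-fd-p}. Combined with $R(-k^2h^2 I_N)E = -k^2h^2\,RE = -k^2h^2 I_n$, these two relations already yield the one-dimensional case $R\,A^P E = A^D$, which is in fact implicit in the computation carried out in the proof of Proposition~\ref{inverse1d}.

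The verification of the two building blocks is a direct calculation from the explicit block forms $R=(I_n\mid \bm 0)$ and $E=(I_n,\bm 0,-\tilde I_n,\bm 0)^T$. Since $R$ extracts the first $n$ rows and these coincide with the leading identity block of $E$, one gets $RE=I_n$ immediately. For $R\,T^P E=T^D$ the only delicate point is the effect of the circulant wrap-around terms $-e_1e_N^t-e_Ne_1^t$ in $T^P$: row $1$ couples to column $N$ and row $n$ couples to column $n+1$, but both of these columns meet exactly the zero-padding rows of $E$ (the entries $g_{n+1}=g_N=0$ prescribed in \eqref{extension}). Hence the wrap-around contributions vanish, the odd-reflection block is never seen by the first $n$ rows, and the restricted stencil reproduces precisely the Dirichlet tridiagonal $T^D$. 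This boundary bookkeeping is where essentially all the content of the lemma sits.

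With the one-dimensional relations established, the passage to dimension $d$ is pure Kronecker algebra. I would expand $A^P$ from \eqref{eq:system2d_perio} as the sum of its $d$ summands $I_N^{\otimes(j-1)}\otimes T^P\otimes I_N^{\otimes(d-j)}$ together with the zeroth-order term $-k^2h^2 I_N^{\otimes d}$, sandwich each between $R^{\otimes d}$ and $E^{\otimes d}$, and apply the mixed-product rule $(A_1\otimes\cdots\otimes A_d)(B_1\otimes\cdots\otimes B_d)=(A_1B_1)\otimes\cdots\otimes(A_dB_d)$ slot by slot. Every factor then collapses through $RE=I_n$ except in the single slot carrying $T^P$, which collapses through $R\,T^P E=T^D$, while the zeroth-order term yields $-k^2h^2 I_n^{\otimes d}$. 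Summing the results reproduces exactly the expression for $A^D$ in \eqref{eq:system2d_dir}, giving $A^D=R^{\otimes d}A^P E^{\otimes d}$. The hard part is thus confined to the one-dimensional stencil check; once the periodic coupling and the odd-extension block are confirmed not to contaminate the first $n$ rows, the remainder is a mechanical use of the mixed-product identity and distributivity over the sum.
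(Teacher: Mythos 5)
Your proof follows essentially the same route as the paper's: expand $A^P$ into its Kronecker summands, push $R^{\otimes d}$ and $E^{\otimes d}$ through via the mixed-product rule, and reduce everything to the one-dimensional identities $RE=I_n$ and $RT^PE=T^D$ (which the paper merely asserts as ``straightforward to check'' but you verify explicitly, correctly noting that the circulant wrap-around and the row-$n$/column-$(n+1)$ coupling are annihilated by the zero rows of $E$). The argument is correct and complete.
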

\begin{proof}
By using the standard properties of the tensor product, we have
\begin{eqnarray*}
R^{\otimes d} A^P  E^{\otimes d} &=& R^{\otimes d} \left(\sum_{j=1}^d \left(I^{\otimes(j-1)} \otimes T^P \otimes I^{\otimes(d-j)}\right)
-k^2h^2  I^{\otimes d}\right) E^{\otimes d} \\
& = & R^{\otimes d}\left(\sum_{j=1}^d  \left(E^{\otimes(j-1)} \otimes T^P E \otimes E^{\otimes(d-j)}\right)
-k^2h^2 E^{\otimes d}\right) \\
& = & \sum_{j=1}^d  \left(I^{\otimes(j-1)} \otimes RT^P E \otimes I_{n-1}^{\otimes(d-j)}\right)
-k^2h^2  I^{\otimes d}.
\end{eqnarray*}
It is straightforward to check that $RT^PE=T^D$.
Thus, $ R^{\otimes d}  A^P  E^{\otimes d}=A^D$.
\end{proof}
The following theorem gives the representation of the inverse of the discretized Dirichlet problem in the multidimensional case.
\begin{theorem}\label{inverseanyd}
The inverse of $A^D$ can be written as
\begin{equation}
\label{B-def-highdim}
(A^D)^{-1}=R_d (A^P)^{-1} E_d,\quad\mbox{or equivalently,}\quad \bm u = R_d (A^P)^{-1} E_d \bm f.
\end{equation}
\end{theorem}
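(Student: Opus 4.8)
The plan is to reduce the claim to a single intertwining identity, namely that the extension operator $E_d$ conjugates the two inverses,
\[
(A^P)^{-1} E_d = E_d (A^D)^{-1}.
\]
Granting this, the theorem follows in one line: applying $R_d$ on the left and using $R_d E_d = (RE)^{\otimes d} = I_n^{\otimes d}$ (which holds because $RE = I_n$ by the definitions \eqref{extension}--\eqref{restriction}, together with the mixed-product rule for the Kronecker product), we obtain $R_d (A^P)^{-1} E_d = R_d E_d (A^D)^{-1} = (A^D)^{-1}$, as desired.

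To obtain the intertwining identity I would first establish its ``forward'' form $A^P E_d = E_d A^D$; the displayed relation then follows at once by multiplying on the left by $(A^P)^{-1}$ and on the right by $(A^D)^{-1}$, both of which exist. This forward identity is precisely the operator-level content of Proposition \ref{1dodd}: the odd extension of the Dirichlet solution solves the periodic problem. In one dimension it amounts to $T^P E = E T^D$, since the zeroth-order term $-k^2h^2 I$ commutes with $E$ trivially. I would verify it columnwise, writing $E e_k = e_k^{(N)} - e_{N-k}^{(N)}$ and checking the resulting three-point stencil of $T^P$ against the $k$-th column of $T^D$.

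The lift to $d$ dimensions is then purely formal and mirrors the computation already carried out in the proof of Lemma \ref{ADAP}. Using the mixed-product rule $(A\otimes B)(C\otimes D) = (AC)\otimes(BD)$ together with $I_N E = E I_n$ and $T^P E = E T^D$, each summand $I_N^{\otimes(j-1)}\otimes T^P\otimes I_N^{\otimes(d-j)}$ applied to $E^{\otimes d}$ becomes $E^{\otimes d}\bigl(I_n^{\otimes(j-1)}\otimes T^D\otimes I_n^{\otimes(d-j)}\bigr)$, and the $k^2 h^2$ terms factor through $E^{\otimes d}$ in the same way, yielding $A^P E_d = E_d A^D$.

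The main obstacle I anticipate is the one-dimensional stencil verification of $T^P E = E T^D$ at the ``special'' rows, i.e. the two endpoints and the inserted zeros at positions $n+1$ and $N$. This is exactly where $A^P$ departs from a plain tridiagonal matrix: the circulant wrap-around couplings $-e_1 e_N^t - e_N e_1^t$ must cancel against the antisymmetry of $E e_k$ so as to reproduce the Dirichlet (no-wrap) stencil. Note that Lemma \ref{ADAP} supplies only the weaker identity $R_d A^P E_d = A^D$, whereas the argument needs the stronger, range-preserving statement $A^P E_d = E_d A^D$ (equivalently, that $(A^P)^{-1}$ maps the odd subspace $\operatorname{range}(E_d)$ into itself). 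That range-preservation is the genuinely new ingredient; everything else is the tensor-product bookkeeping already rehearsed for Lemma \ref{ADAP} and Proposition \ref{inverse1d}.
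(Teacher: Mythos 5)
Your proof is correct, but it takes a different route from the paper's. The paper first proves Lemma~\ref{ADAP}, i.e.\ $A^D=R_dA^PE_d$, and then verifies $A^DR_d(A^P)^{-1}E_d=R_dA^PE_dR_d(A^P)^{-1}E_d=I$; note that this last step cannot be justified by $RE=I$ alone, since the factor appearing in the middle is $E_dR_d$ (a projection onto the odd subspace, not the identity), so the paper is implicitly also using that $(A^P)^{-1}E_d$ lands in the range of $E_d$ --- which is the $d$-dimensional analogue of Proposition~\ref{1dodd}, stated in the paper only for $d=1$. Your intertwining identity $A^PE_d=E_dA^D$ is strictly stronger than Lemma~\ref{ADAP} (multiplying it on the left by $R_d$ and using $R_dE_d=I$ recovers the lemma) and it simultaneously delivers exactly the missing range-invariance, since $(A^P)^{-1}E_d=E_d(A^D)^{-1}$ manifestly maps into $\operatorname{range}(E_d)$. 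Your columnwise verification of $T^PE=ET^D$ does go through: for the interior columns the stencils match trivially, and at $k=1$ and $k=n$ the wrap-around contributions $-e_1e_N^t-e_Ne_1^t$ of the circulant produce terms $\mp e_N$ that cancel in the difference $T^Pe_k-T^Pe_{N-k}$, while the zeros inserted at positions $n+1$ and $N$ absorb the truncated Dirichlet stencil; the tensor lift is then the same bookkeeping as in Lemma~\ref{ADAP}. What your approach buys is a single identity from which both Lemma~\ref{ADAP} and the odd-subspace invariance follow, making the one-line conclusion $R_d(A^P)^{-1}E_d=R_dE_d(A^D)^{-1}=(A^D)^{-1}$ fully self-contained; what the paper's approach buys is that it reuses an already-proved lemma and defers the stencil check to the scalar identity $RT^PE=T^D$, at the cost of leaving the invariance step implicit. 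One small caution: take $R=(I_n\,|\,\bm 0)$ as in the proof of Proposition~\ref{inverse1d} (the index shift in \eqref{restriction} is inconsistent with that), otherwise $RE=I_n$ fails.
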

\begin{proof}
Clearly, it is sufficient to prove that $A^DR_d (A^P)^{-1} E_d=I$.  
\[
A^DR_d (A^P)^{-1} E_d=R_dA^PE_dR_d(A^P)^{-1}E_d=R^{\otimes d}A^PE^{\otimes d}R^{\otimes d}(A^P)^{-1}E^{\otimes d}=I,
\]
by the using properties of the matrix tensor product, Lemma \ref{ADAP}
and identity $RE=I$.
\end{proof}

Notice here, all the above results can be extended to
rectangle (non-square) domain without too much difference. As a
conclusion, we can solve the constant coefficient Helmholtz equation
with Dirichlet boundary condition by FFT with complexity $O(n\lg n)$,
where $n$ is the problem size.

\subsection{Neumann Boundary Conditions} \label{sc:FFThelmholtz_Neumann}

We next discuss the Fourier method for solving the Neumann problem. We
begin by laying out the details for 1D case.

\subsubsection{Neumann Problem in 1D}
Let's consider the following 1D Helmholtz equation with one side
Neumann boundary condition in $(0,1)$:
\begin{equation}\label{helmholtz-1DNeumann}
-u'' - k^2 u = f, \quad u(0)=0, \quad u'(0)=g.
\end{equation}
Finite difference discretization gives the linear system of equations
$A\bm{u}= \bm{F}$ where 
\begin{equation}\label{helmholtz-1DNeumann-fd}
  A=\operatorname{diag}(-1,2-k^2h^2,-1)-(1-\frac{1}{2}k^2h^2)e_{n+1}e_{n+1}^t\in \mathbb{R}^{(n+1)\times (n+1)},
\end{equation}
and also $\bm u=(u_1,\ldots,u_{n+1})^t$, $h=1/(n+1)$,
$u_j\approx u(jh)$, $f_j\approx f(jh)$, for $j=1,\ldots n+1$, and
$\bm F=(h^2f_1,\ldots,h^2f_n,\frac{1}{2}h^2f_{n+1}+hg)^t$.

With Fourier method for the Dirichlet problem in mind, we do ``even"
extension of the system \eqref{helmholtz-1DNeumann-fd} to get a Toeplitz
system similar to \eqref{helmholtz-fd}:
\begin{equation}\label{helmholtz-1DNeumann-even-fd}
A^e\bm{u}^e = \bm{F}^e, \quad A^e=\operatorname{diag}(-1,2-k^2 h^2,-1) \in \mathbb{R}^{M\times M},
\end{equation}
where $M=2n+1$, $\bm u^e=(u_1,\ldots,u_{M})^t$, and
$\bm F^e=(h^2f_1,\ldots,
h^2f_n,h^2f_{n+1}+2hg,h^2f_{n},\ldots,h^2f_1)^t$. By symmetry, the
solution of system \eqref{helmholtz-1DNeumann-even-fd}, when
restricted on interval $(0,1)$, will be the same as solution of system
\eqref{helmholtz-1DNeumann-fd}. Even though the problem size has been
doubled, the extended system is Toeplitz, thus can be solved by
Fourier method from Section \ref{sc:FFThelmholtz}.
 
In summary, we have
the following inverse of the Neumann operator, that is the solution of :
\[
A^{-1}\bm{F} =  R\mathcal{F}^{-1} (D^p)^{-1} \mathcal{F}E_{o}E_{e}\bm{F}
\]
The operators involved in the definition above are (from right to
left): even extension, odd extension followed by the inverse of the
periodic problem and then restriction. More precisely, for the even extension $E_e$ we have, 
\begin{equation}
\mathbb{C}^M \ni \quad 
 [E_e\bm{F}]_j = \begin{cases}
F_{j},\quad j=1,\ldots,n,\\ 
2F_{n+1}, \quad j=n+1, \\
F_{M+1-j}, \quad j=n+2,\ldots,2n+1.\\

\end{cases}
\end{equation}
Next, for the extension as to odd functions/vectors we have:
\begin{equation}
\mathbb{C}^P \ni \bm [E_oE_e\bm{F}]_j = \begin{cases}
F_{j}^e,\quad j=1,\ldots,M,\\ 
0,\quad j=M+1,\\
-F_{N-j}^e, \quad j=M+2,\ldots,2M+1\\
0,\quad j=2M+2.
\end{cases}
\end{equation}
Finally, we have the diagonal $D^p=\operatorname{diag}(4\sin (j\pi/M))_{j=0}^{M-1}$ and the restriction $R$:
\[
\bm v\in \mathbb{R}^P,\quad  \mathbb{C}^{n+1} \ni \bm u = R \bm v, \quad u_j =v_{j}^p, \quad j=1,\ldots n+1.
\]

\subsubsection{Generalization to Higher Dimensions (Neumann problem)}
The Fourier method for Neumann problem can also be generalized to
any dimension $d$ in a fashion similar to the procedure given earlier
for the Dirichlet
problem. Just for illustration, if we consider
$d=2$ and  $\Omega=(0,1)^2$ with the following boundary conditions:
\begin{equation}
\label{eq:anyDNeumann}
\frac{\partial u}{\partial	x}=g, \quad \text{on} \quad \Gamma_1=\{x=1\}\times(0,1)  \\
\end{equation}
and homogeneous Dirichlet conditions elsewhere.
To solve the resulting linear system
we first do an ``even'' extension of the data and we arrive at:
\begin{equation}\label{helmholtz-2DNeumann-even-fd}
A^e\bm{u}^e = \bm{F}^e, \quad A^e=I_M\otimes T^d_n+T^d_M\otimes I_n-k^2 h^2I_{Mn} \in \mathbb{R}^{Mn\times Mn},
\end{equation}
where   $T^d_j=\operatorname{tridiag}(-1,2,-1) \in \mathbb{R}^{j\times j}$,  $M=2n+1$, $\bm u^e=(\bm u_1,\ldots,\bm u_{M})^t$, and
$\bm F^e=(h^2\bm f_1,\ldots, h^2\bm f_n,h^2\bm f_{n+1}+2hg,h^2\bm
f_{n},\ldots,h^2\bm f_1)^t$.  Clearly, on $\Omega$, the restriction of
$\bm u^e$ is the same as the solution $\bm u$ of the Neumann problem.
Now, for the solution of \eqref{helmholtz-2DNeumann-even-fd} we can
apply the method we have already described in \S\ref{sc:Dir_high_d}.

\section{Stekloff Eigenvalue Computation with Fourier Method}
\label{sc:FFT_eig}
\subsection{Variational Formulation} Multiplying the first equation in \eqref{eq:SP} by $v\in H^1(\Omega)$ and 
  integrating by parts, we get:
\[\int_\Omega \nabla w\nabla v-\eta^2\int_\Omega wv=-\lambda \int_\Gamma wv.\]

 Define $A(\eta):H^1(\Omega)\rightarrow H^{-1}(\Omega)$ as
 \begin{equation}
\langle A(\eta)w,v\rangle:=(\nabla w, \nabla v)_\Omega-\eta^2(w,v)_\Omega
\quad \forall v\in H^1(\Omega),
\end{equation}
where  $(\cdot, \cdot)_\Omega$ is the $L^2$ inner product and $\langle \cdot, \cdot\rangle$ is duality pairing between $H^{-1}(\Omega)$ and $H^1(\Omega)$. The Stekloff 
operator, or Dirichlet-to-Neumann(DtN), $S(\eta)$ can be defined in two steps:

 Firstly, for any $f\in H^{1/2}(\Gamma)$, define $f_0\in H_0^1(\Omega)$ as the unique function satisfies:
  $$\langle A(\eta)f_0,v_0\rangle=-\langle A(\eta)(Ef),v_0\rangle, \quad \forall v_0\in H_0^1(\Omega),$$
 where $Ef$ is $H^1$-bounded extension of $f$, e.g. harmonic extension.
 
  Secondly, define the action of $S(\eta):H^{1/2}(\Gamma)\rightarrow H^{-1/2}(\Gamma)$ as
  \begin{equation}
  \label{eq:S_def}
  \langle S(\eta)f,g\rangle_{1/2}=\langle A(\eta)(f_0+Ef),Eg\rangle,\quad 
  \forall g\in H^{1/2}(\Gamma),
  \end{equation}
  where $\langle\cdot,\cdot\rangle_{1/2}$ is the duality pairing
  between $H^{1/2}(\Gamma)$ and $H^{-1/2}(\Gamma)$.

\begin{lemma} 
\label{lem: SP}
The equation \eqref{eq:SP} has a non-trivial solution 
$\Leftrightarrow$ $\lambda$ is an eigenvalue of the Stekloff  operator $S(\eta)$.
\end{lemma}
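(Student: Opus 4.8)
The plan is to realize $S(\eta)$ concretely as the Dirichlet-to-Neumann map and then to set up a bijection, via the Dirichlet trace, between nontrivial solutions $w$ of~\eqref{eq:SP} and eigenfunctions of $S(\eta)$. The most important preliminary step is to unpack the definition~\eqref{eq:S_def}. Given $f\in H^{1/2}(\Gamma)$, the function $u:=f_0+Ef$ satisfies $\langle A(\eta)u,v_0\rangle=0$ for all $v_0\in H_0^1(\Omega)$ by the defining relation for $f_0$; hence $u$ is precisely the weak solution of the homogeneous Helmholtz equation $-\Delta u-\eta^2 u=0$ with Dirichlet trace $u|_\Gamma=f$. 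Applying Green's identity to $\langle A(\eta)u,Eg\rangle$ and using that $u$ solves the equation, I would show $\langle A(\eta)u,Eg\rangle=\langle \partial_n u,g\rangle_{1/2}$, so that~\eqref{eq:S_def} reads $S(\eta)f=\partial_n u|_\Gamma$, the outward normal derivative understood in $H^{-1/2}(\Gamma)$.

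For the \emph{forward} direction, let $w\neq 0$ solve~\eqref{eq:SP} and put $f:=w|_\Gamma$. Since $w$ itself solves the Helmholtz equation with trace $f$, uniqueness of the interior Dirichlet problem gives $u=w$, whence $S(\eta)f=\partial_n w|_\Gamma=-\lambda w|_\Gamma=-\lambda f$ by the boundary condition $\partial_n w+\lambda w=0$. Thus $f$ is an eigenfunction of $S(\eta)$ with eigenvalue $-\lambda$ (that is, $\lambda$ is a Stekloff eigenvalue in the sign convention of~\eqref{eq:SP}), provided I can rule out $f=0$. This I would do by noting that $f=0$ places $w$ in $H_0^1(\Omega)$ and in the kernel of the restriction of $A(\eta)$ to $H_0^1(\Omega)$, forcing $w=0$ and contradicting nontriviality.

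The \emph{backward} direction reverses this construction. Starting from a nonzero $f$ with $S(\eta)f=-\lambda f$, I set $w:=f_0+Ef$; by the first step $w$ solves $-\Delta w-\eta^2 w=0$ weakly with $w|_\Gamma=f$ and $\partial_n w|_\Gamma=S(\eta)f=-\lambda f=-\lambda w|_\Gamma$, so $\partial_n w+\lambda w=0$ holds on $\Gamma$ and $w$ solves~\eqref{eq:SP}. Since its trace $f$ is nonzero, $w\neq 0$, which closes the equivalence.

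I expect the main obstacle to be the rigorous identification $S(\eta)f=\partial_n u|_\Gamma$: the normal derivative of an $H^1$ function solving the equation lives only in $H^{-1/2}(\Gamma)$, so Green's formula must be read as the weak (distributional) definition of $\partial_n u$, and I must check that the right-hand side of~\eqref{eq:S_def} depends on $g$ only through its trace, not on the particular extension $Eg$ (which follows because $u$ is a weak solution). A secondary point, already implicit in the definition of $f_0$, is the well-posedness of the interior Dirichlet problem, i.e.\ the invertibility of $A(\eta)$ on $H_0^1(\Omega)$, equivalently that $\eta^2$ is not a Dirichlet eigenvalue of $-\Delta$; this is exactly what makes $u=w$ in the forward direction and guarantees $S(\eta)$ is well defined. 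Finally, I would keep careful track of the sign, since $\partial_n w+\lambda w=0$ yields $S(\eta)f=-\lambda f$, the eigen-relation meant by the statement.
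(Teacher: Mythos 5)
Your proof is correct and follows essentially the same route as the paper's: both arguments reduce to the identity $\langle S(\eta)(w|_\Gamma),g\rangle_{1/2}=\langle A(\eta)w,Eg\rangle$, which for a solution $w$ of \eqref{eq:SP} equals $-\lambda\,(w|_\Gamma,g)_\Gamma$ by the variational formulation, and your Dirichlet-to-Neumann reading $S(\eta)f=\partial_n u|_\Gamma$ is just a more explicit phrasing of that same computation. You are in fact more careful than the paper on three points it glosses over: you carry out the converse direction (the paper only asserts it is ``similar''), you verify $w|_\Gamma\neq 0$ via injectivity of $A(\eta)$ on $H_0^1(\Omega)$, and you track the sign correctly ($S(\eta)f=-\lambda f$, whereas the paper's final display drops the minus sign present in its own variational identity).
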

\begin{proof}

We prove one side of this equivalence  here since the other half is similar. Suppose $u$ is solution to \eqref{eq:SP}, then
\[
 \langle A(\eta)u, v_0 \rangle=0,\quad \forall v_0 \in H^1_0(\Omega).
\]

 From the above equation, we have 
\[
\langle A(\eta)(u-Eu_\Gamma),v_0\rangle = -\langle A(\eta)Eu_\Gamma,v_0 \rangle \quad \forall v_0\in H^1_0(\Omega),
\] where $u_\Gamma :=u|_\Gamma$ as the trace of $u$ on $\Gamma$. Denote $u_0:=u-Eu_\Gamma$. It's easy to see that $u_0\in H_0^1(\Omega)$ and the following equation holds:
\[
\langle S(\eta)u_\Gamma, g\rangle_{1/2}=\langle A(\eta)(u_0+Eu_\Gamma), Eg\rangle =(\lambda u_\Gamma, g).
\]
This shows that $\lambda$ must be the eigenvalue of $S(\eta)$.
\end{proof}

This lemma implies that solving problem \eqref{eq:SP} is equivalent to
solving the eigenvalue problem for $S(\eta)$, for given $\eta$. In the
next section we describe an efficient method for this task, namely the
Fourier method for Stekloff eigenvalues.

\subsection{Neumann-to-Dirichlet and Dirichlet-to-Neumann Operators} 
Let $\mu\in L^2(\Gamma)$ and define $w_\mu$ to be the solution of
equation \eqref{eq:SP} with Neumann boundary condition, i.e. $w_{\mu}$
satisfies
 \begin{equation}
 \label{eq:Neu_variational}
 \langle A(\eta)w_\mu, v \rangle=\langle \mu, v_\Gamma \rangle_{1/2},\quad \text{for any} \quad v\in H^1(\Omega).
  \end{equation} 
Taking the trace of $w_\mu$, we can define Neumann-to-Dirichlet (NtD) operator $T$ as $T\mu=w_\mu|_{\Gamma}$. After discretization, we
  have the following linear system:
\begin{equation}
\begin{bmatrix}
A_{II}  & A_{IB}\\
A_{BI}  & A_{BB}\\
\end{bmatrix}\begin{bmatrix}
\bm w_I \\
\bm w_B
\end{bmatrix}=\begin{bmatrix}
\bm 0\\
\bm \mu
\end{bmatrix},
\end{equation}
where $\bm w_I$ and $\bm w_B$ are solution of
\eqref{eq:Neu_variational} restricted in $\Omega$ and on $\Gamma$
respectively. Then the discretized NtD operator $T_h$ corresponding to
the NtD operator $T$ is
\begin{equation}
\label{eq:defTh}
T_h\bm \mu=\begin{bmatrix}
\bm 0, I
\end{bmatrix}
\begin{bmatrix}
A_{II}  & A_{IB}\\
A_{BI}  & A_{BB}\\
\end{bmatrix}^{-1}\begin{bmatrix}
\bm 0\\
\bm \mu
\end{bmatrix}.
\end{equation}
To discretize the Stekloff operator $S$ in analogous fashion, we also
consider the Dirichlet boundary problem of \eqref{eq:SP} with boundary
data $f\in H^{1/2}(\Omega)$.  With the same discretization and
the same ordering as for the Neumann problem, we have the following linear
system:
\begin{equation}
\begin{bmatrix}
A_{II}  & A_{IB}\\
\bm 0  & I\\
\end{bmatrix}\begin{bmatrix}
\bm w_I \\
\bm w_B
\end{bmatrix}=\begin{bmatrix}
\bm 0\\
\bm f
\end{bmatrix}.
\end{equation}  
Clearly, $\bm w_b = \bm f$ as expected and
the discrete version of \eqref{eq:S_def} gives the action of $S_h$ as
\begin{equation}\label{eq:defSh}
S_h\bm f=(A_{BB}-A_{BI}A_{II}^{-1}A_{IB})\bm f.
\end{equation}
We have the following Lemma, which shows that $T_h$ is the inverse of $S_h$.
\begin{lemma}\label{ADAPx} For the operators $T_h$ and $S_h$ defined
  in~\eqref{eq:defTh} and \eqref{eq:defSh}, respectively, we have
  \(S_hT_h=I\).
\end{lemma}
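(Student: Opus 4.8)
The plan is to show directly that $T_h$ is the inverse of the Schur complement $S_h$, from which the claimed identity $S_h T_h = I$ follows immediately. The starting observation is that the action of $T_h$ defined in~\eqref{eq:defTh} amounts to extracting the boundary component $\bm w_B$ of the solution to the block system with right-hand side $(\bm 0, \bm\mu)^t$, since the selection matrix $[\bm 0, I]$ returns exactly that component.

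First I would write out the two block equations governing the pair $(\bm w_I, \bm w_B)$, namely
\[
A_{II}\bm w_I + A_{IB}\bm w_B = \bm 0, \qquad A_{BI}\bm w_I + A_{BB}\bm w_B = \bm\mu.
\]
Because the interior block $A_{II}$ is invertible, the first equation can be solved for the interior unknowns, giving $\bm w_I = -A_{II}^{-1}A_{IB}\bm w_B$. I would then substitute this expression into the second block equation to eliminate $\bm w_I$, which yields $(A_{BB} - A_{BI}A_{II}^{-1}A_{IB})\bm w_B = \bm\mu$, that is, $S_h\bm w_B = \bm\mu$ with $S_h$ precisely the operator from~\eqref{eq:defSh}.

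Since $T_h\bm\mu = \bm w_B$ by construction, the relation $S_h \bm w_B = \bm\mu$ shows $T_h\bm\mu = S_h^{-1}\bm\mu$ for every $\bm\mu$, hence $T_h = S_h^{-1}$ and $S_h T_h = I$. This is nothing more than a block LU (Schur complement) elimination, so I expect no genuine difficulty beyond careful bookkeeping of the block multiplications.

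The one point that deserves attention is the invertibility of $A_{II}$: it is needed both for the Schur complement $S_h$ to be well defined and for the elimination step that solves for $\bm w_I$. For the indefinite Helmholtz discretization this invertibility can fail at the resonant wave numbers where the interior Dirichlet problem becomes singular, so I would record the implicit assumption that the discretization parameters are chosen to avoid these (generically exceptional) cases. With $A_{II}$ invertible, the argument above is complete.
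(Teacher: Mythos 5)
Your proof is correct and follows essentially the same route as the paper: both arguments rest on the block-LU (Schur complement) structure, eliminating $\bm w_I$ via $A_{II}^{-1}$ so that $S_h \bm w_B = \bm\mu$, i.e. $T_h = S_h^{-1}$. Your explicit remark that $A_{II}$ must be invertible (away from interior Dirichlet resonances) is a worthwhile addition that the paper leaves implicit.
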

\begin{proof}
  We just need to show $S_hT_h\bm \mu= \bm \mu$ for any $\bm\mu$. This
  can be easily proved by Block-LU factorization.
\begin{equation}
\begin{split}
S_hT_h\bm \mu
&=S_h\begin{bmatrix}
\bm 0,\bm I
\end{bmatrix}
\begin{bmatrix}
A_{II}  & A_{IB}\\
A_{BI}  & A_{BB}\\
\end{bmatrix}^{-1}\begin{bmatrix}
\bm 0\\
\bm \mu
\end{bmatrix}\\
&=S_h\begin{bmatrix}
\bm 0,\bm I
\end{bmatrix}
\bigg(\begin{bmatrix}
I  & \bm 0\\
A_{BI}A_{II}^{-1}  & I\\
\end{bmatrix}\begin{bmatrix}
A_{II}  & A_{IB}\\
\bm 0  & S_h\\
\end{bmatrix}\bigg)^{-1}\begin{bmatrix}
\bm 0\\
\bm \mu
\end{bmatrix} =\bm \mu.
\end{split}
\end{equation}
\end{proof}

The results from the previous two sections show that we can
efficiently compute the Stekloff eigenvalues of small magnitude as
well as large magnitude. For example, the eigenvalues of small
magnitude of $S_h$ can be approximated by the reciprocal of the
eigenvalues of NtD operator $T_h$. The action of $T_h$, which is
needed repeatedly in such a procedure, can be efficiently computed by
the Fourier method applied to the solution of the Helmholtz-Neumann
problem as we have discussed earlier. For the eigenvalues of largest
magnitude the same applies, except that we need the action of $S_h$,
which requires fast solution of the corresponding Dirichlet problems,
which we also described earlier.

\section{Numerical Examples}
\label{sc:num_exp}
\subsection{Helmholtz Equation with Varying Wave Number}
Until now, we have shown that Fourier method can solve the constant
coefficient Helmholtz equation with Dirichlet boundary condition
exactly with nearly linear complexity. Using this exact solver as
preconditioner, we can also solve certain type of varying coefficient
problem, e.g. slowly varying coefficient.

In this numerical example, we consider a homogeneous Dirichlet problem
in domain $\Omega=(0,1)^2$ with uniform external force $f=1$ and the
following velocity fields:
\renewcommand{\labelenumi}{\textbf{\arabic{enumi}.~}}
\begin{enumerate}
\item $c(x_1,x_2)=\frac{4}{3}[1-0.5\exp(-0.5(x_1-0.5)^2]$, 
\item  $c(x_1,x_2)=\frac{4}{3}[1-0.5\exp(-0.5(x_1-0.5)^2+(x_2-0.5)^2)]$.
\end{enumerate}

We study how the preconditioner behaves when $\omega$ and $n$ vary in
the same ratio. In this way, the percentages of positive and negative
eigenvalues are fixed, which poses the most challenge for computation.
We record the numbers of iterations for GMRES method to converge
(error tolerance $10^{-6}$) and total times used for computing in
Table \ref{tb:vary_omg_n}. For $w/2\pi=3.2$, the wave number fields
$k_i(x,y)$ and resulting wave field $u_i(x,y)$ with $i=1,2$ have been
shown in Figure \ref{fig:k_and_u}.

\begin{table}[ht]
  \caption{Varying $\omega$ and $n$, number of GMRES iterations and time to converge}
  \centering
  \begin{tabular}{ | c |  c | c | c | c | c |}\hline
$\frac{\omega}{2\pi}$ & $n^2$& $N_1$ & $T_1$  & $N_2$  & $T_2$  \\ \hline
 0.8& $50^2$ & 4& 0.068 & 4 & 0.062 \\  \hline
    1.6&  $100^2$ & 5 & 0.074& 5 & 0.076 \\  \hline
    3.2&   $200^2$& 6 & 0.12 &  6&  0.12 \\   \hline
    6.4&  $400^2$ & 13 & 0.60& 10 & 0.46 \\  \hline
  \end{tabular}
    \label{tb:vary_omg_n}
\hspace{0.7cm}
\end{table}

\begin{figure}[!ht]
\centering
\subfigure[wave number function $k_1(x,y)$]{\includegraphics[width=5cm]{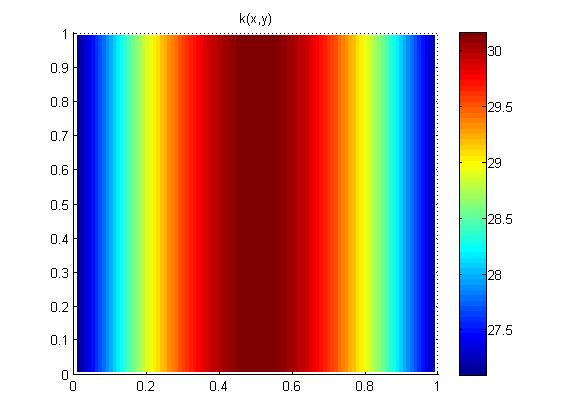}}
\subfigure[wave simulated $u_1(x,y)$]
{\includegraphics[width=5cm]{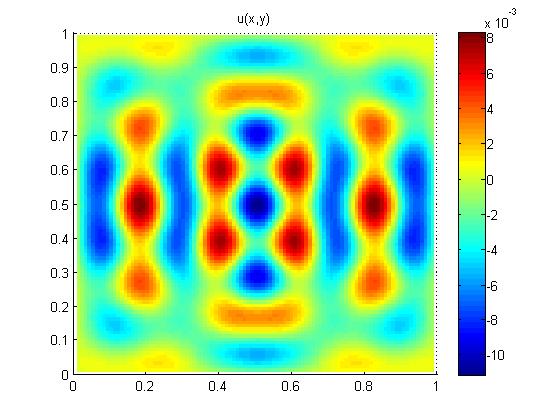}}
\hfill
\subfigure[wave number function $k_2(x,y)$]{\includegraphics[width=5cm]{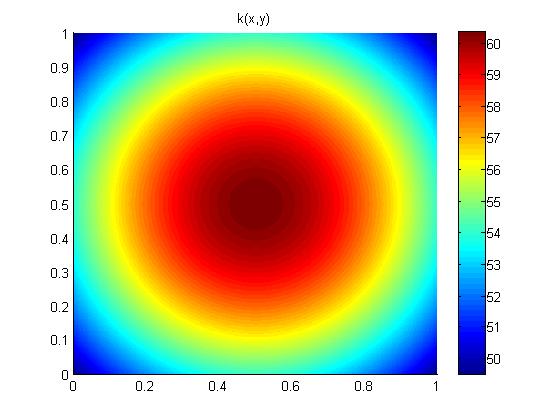}}
\subfigure[wave simulated $u_2(x,y)$]
{\includegraphics[width=5cm]{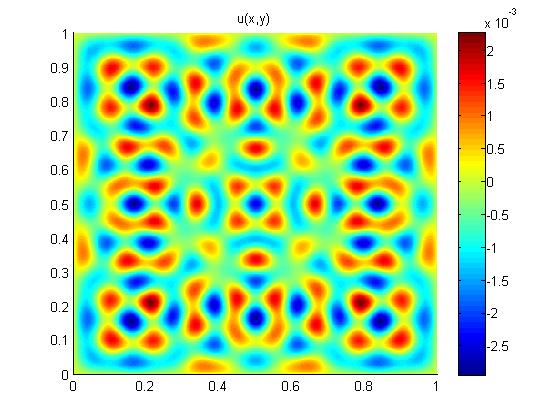}}
\caption{Simulation results for different velocity fields. }
\label{fig:k_and_u}
\end{figure}

\subsection{Computing Stekloff Eigenvalues}
In this numerical example, we apply our Fourier-based method, which has been introduced in Section \ref{sc:FFT_eig}, to the Stekloff eigenvalues problem. 
We create a function handle that solves the Neumann boundary problem with FFT Helmholtz solver proposed, then use MATLAB \verb$eigs$ function to calculate the eigenvalues. 
For different $\eta$, the smallest six eigenvalues in magnitude can be found in Table \ref{tb:eigenvalues}.

\begin{table}[ht]
  \caption{Smallest six Stekloff eigenvalues in magnitude for different $\eta$}
  \centering
  \begin{tabular}{ | c |  c | c | c | c | c | c |}\hline
$\eta$ & $\lambda_1$& $\lambda_2$ & $\lambda_3$  & $\lambda_4$ & $\lambda_5$   &$\lambda_6$ \\ \hline
 0.5&    0.2132 &   0.2158&   0.5561 & 0.8910 &  0.8910 &  2.6103\\  \hline
    1&   0.2162 & 0.2190 &  0.5832 &  1.0003  &  1.0003 &    4.1828\\  \hline
    2&  0.2302  &  0.2327 &  0.7548 &   2.4831 &   2.4831  &  -2.6194\\   \hline
    4&   0.4909 &  0.4909 & 0.6099&   0.7888 &      1.5170 &     1.5170\\  \hline
  \end{tabular}

    \label{tb:eigenvalues}
\end{table}

\section{Conclusions}
In this paper, we proposed an efficient method for finding eigenvalues
of indefinite Stekloff operators.  The main tool that we developed is
a fast Fourier method for solving constant coefficient Helmholtz
equation with Dirichlet or Neumann boundary condition on rectangular
domain.  The resulting algorithm is efficient, transparent, and easy
to implement. Our numerical experiments show that such algorithm works
also as a solver for the Helmholtz problem with mildly varying
coefficient (non-constant wave number).  Another pool of important
applications will be the computation of transmission eigenvalues,
where our method has the potential to provide an efficient
computational tool.

\bibliographystyle{plainnat}
\bibliography{mybib}

\end{document}